\newcommand{\ca}{\mathcal{A}}
\newcommand{\cb}{\mathcal{B}}
\newcommand{\cc}{\mathcal{C}}
\newcommand{\ci}{\mathcal{I}}
\newcommand{\cm}{\mathcal{M}}
\newcommand{\cn}{\mathcal{N}}
\newcommand{\ch}{H}
\newcommand{\I}{\mathcal{I}}
\newcommand{\M}{\mathcal{M}}
\newcommand{\C}{\mathcal{C}}
\newcommand{\cd}{\mathcal{D}}
\newtheorem{thm}{Theorem}[section]
\newtheorem{lem}[thm]{Lemma}
\newtheorem{obs}[thm]{Observation}
\newtheorem{observation}[thm]{Observation}
\newtheorem{definition}[thm]{Definition}
\newtheorem{claim}[thm]{Claim}  
\newtheorem{notation}[thm]{Notation}
\newtheorem{remark}[thm]{Remark}
\newcommand{\refT}[1]{Theorem~\ref{#1}}
\newcommand{\refL}[1]{Lemma~\ref{#1}}
\newcommand{\refS}[1]{Section~\ref{#1}}
\newcommand{\refOb}[1]{Observation~\ref{#1}}
\title{Coloring the intersection of two matroids}
\author{Eli Berger}
\address{Department of Mathematics, University of Haifa, Haifa 31905,  Israel}
\email{berger.haifa@gmail.com}
\author{He Guo}
\address{Department of Mathematics and Mathematical Statistics, Ume\r{a} University, Ume\r{a} 90187, Sweden}
\email{he.guo@umu.se}
\begin{document}
	\date{August 2024; Revised in January 2025}

	\maketitle
	
	\begin{abstract}
		A result~[The intersection of a matroid and a simplicial complex, {\em Trans. Amer. Math. Soc.} \textbf{358}] from 2006 of Aharoni and the first author of this paper states that for any two positive integers $p,q$, where $p$ divides $q$, if a matroid $\cm$ is $p$-colorable and a matroid $\cn$ is $q$-colorable then $\cm \cap \cn$ is $(p+q)$-colorable. In this paper we show that the assumption that~$p$ divides~$q$ is in fact redundant, and we also prove that $\cm \cap \cn$ is even $p+q$ list-colorable.
		
		The result uses topology and relies on a new parameter yielding a lower bound for the topological connectivity of the intersection of two matroids.
	\end{abstract}
	\section{Introduction}\label{sec:intro}
	
	A {\em hypergraph} is a pair $H=(V,E)$ where the \emph{vertex set}~$V$ is a finite set and the \emph{edge set}~$E$ is a set of subsets of~$V$.
	A set $X \subseteq V$ is called {\em independent} in $H$ if there is no $e \in E$ such that $e \subseteq X$. We denote by $\ci(H)$ the set of all independent sets in $H$. 
	
	A property of a set of the form $\cc = \ci(H)$ is that if $T \in \cc$ and $S \subseteq T$ then $S \in \cc$. For a finite set~$V$, a set~$\cc$ of subsets of~$V$ with such closed under taking subsets property is called an {\em (abstract simplicial) complex}, and $V$ is called the \emph{ground set} of the complex~$\cc$.  For convenience, in this note we assume that every element in the ground set of a complex is included in some set of the complex.
	
	\begin{definition}
		A complex~$\cm$ is called a matroid if the following hold:
		\begin{itemize}
			\item $\emptyset\in\cm$.
			\item (Independence augmentation axiom) If $S,T\in\cm$ and $|S|<|T|$, then there exists $v\in T\setminus S$ such that $S\cup\{v\}\in\cm$.
		\end{itemize}
		A set in~$\cm$ is called an independent set of the matroid~$\cm$.
	\end{definition}

	Two widely studied parameters in graph theory, chromatic number and list chromatic number of a graph~$G$, can be generalized to a complex~$\cc$, as defined in the following way (in the former case, $\cc=\ci(G)$).
	
	\begin{definition}
		Given a complex~$\cc$ on the ground set~$V$, the chromatic number~$\chi(\cc)$ of~$\cc$ is the minimum number of sets in~$\cc$ such that their union is~$V$. 
	\end{definition}
	\begin{definition}
		Given a complex~$\cc$ on the ground set~$V$, the list chromatic number~$\chi_\ell(\cc)$ of~$\cc$ is the minimum number~$k$ such that for any lists of colors $(L_v)_{v\in V}$ of size~$k$, there exists a choice function $f: V \rightarrow \cup_{v\in V} L_v$ such that $f(v)\in L_v$ for every~$v$ and $f^{-1}(c)\in\cc$ for every color~$c$.
	\end{definition}
	Setting $L_v=\{1,\dots,\chi_\ell(\cc)\}$ for each~$v\in V$ proves
	\[\chi(\cc)\le \chi_\ell(\cc).\]
	
	In~\cite{matcomp}, it is proved that for two matroids~$\cm$ and~$\cn$ on the same ground set, then 
	\[ \chi(\cm\cap\cn)\le 2\max(\chi(\cm),\chi(\cn)). \]
	And it is also proved that if $\chi(\cm)$ divides~$\chi(\cn)$, then
	\[ \chi(\cm\cap\cn)\le \chi(\cm)+\chi(\cn). \]
	In this paper, we extend these results.
	\begin{thm}
		For two matroids~$\cm$ and~$\cn$ on the same ground set,
		\[  \chi(\cm\cap\cn)\le \chi(\cm)+\chi(\cn). \]
	\end{thm}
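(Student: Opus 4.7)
The plan is to prove the stronger list-coloring statement $\chi_\ell(\cm\cap\cn)\le \chi(\cm)+\chi(\cn)$ along the topological route signalled by the abstract. Write $p=\chi(\cm)$, $q=\chi(\cn)$, and let $V$ be the common ground set of $\cm$ and $\cn$.

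\textbf{Reduction to topological connectivity.} The standard bridge from topology to list colorability (in the spirit of Meshulam and Aharoni--Berger--Ziv) says that if a complex $\cc$ on $V$ satisfies $\eta(\cc[W])\ge |W|/k$ for every $W\subseteq V$, where $\cc[W]$ denotes the induced subcomplex and $\eta$ the (suitably normalized) topological connectivity, then $\chi_\ell(\cc)\le k$. I would apply this with $\cc=\cm\cap\cn$ and $k=p+q$. Using that $(\cm\cap\cn)[W]=\cm[W]\cap\cn[W]$ is again an intersection of two matroids with $\chi(\cm[W])\le p$ and $\chi(\cn[W])\le q$, the whole problem reduces to the following purely topological statement: whenever $\chi(\cm)\le p$ and $\chi(\cn)\le q$, one has $\eta(\cm\cap\cn)\ge |V|/(p+q)$.

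\textbf{A new connectivity parameter.} Here is where the ``new parameter'' advertised in the abstract should enter. I would introduce a combinatorially defined quantity $\psi(\cm,\cn)$ --- most naturally a minmax over bipartitions $V=A\sqcup B$ of some expression involving the ranks $r_\cm(A)$ and $r_\cn(B)$, possibly weighted so as to be insensitive to divisibility --- and then establish the topological inequality
\[
\eta(\cm\cap\cn)\ \ge\ \psi(\cm,\cn).
\]
The proof of this lower bound is the heart of the paper. I would attack it by covering the independence complex of $\cm\cap\cn$ by pieces associated with an optimal bipartition (or with a suitable partition of $V$ coming from covers by sets independent in $\cm$ and in $\cn$), and then running a nerve-theorem or Mayer--Vietoris argument to transfer the connectivity from the matroid pieces, whose $\eta$ is controlled by their ranks, to the whole intersection.

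\textbf{Main obstacle and conclusion.} The serious difficulty is the topological bound $\eta(\cm\cap\cn)\ge \psi(\cm,\cn)$: this is where the divisibility hypothesis in the 2006 theorem was used, and where the novelty must appear. Once $\psi$ is available, the remaining step is combinatorial: show that $\chi(\cm)\le p$ and $\chi(\cn)\le q$ together force $\psi(\cm,\cn)\ge |V|/(p+q)$. This should follow from averaging: any bipartition $V=A\sqcup B$ appearing in the definition of $\psi$ inherits covers of $A$ by $p$ sets in $\cm$ and of $B$ by $q$ sets in $\cn$, so one of the parts of each cover has size at least $|A|/p$ (respectively $|B|/q$), and $\tfrac{|A|}{p}+\tfrac{|B|}{q}$ is always at least $|V|/(p+q)$ by convexity. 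Since the same inequality holds after restriction to any $W\subseteq V$, the topological criterion applies and yields $\chi_\ell(\cm\cap\cn)\le p+q$, which in turn implies the theorem.
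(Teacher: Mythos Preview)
Your high-level architecture is exactly that of the paper: reduce to showing $\eta\big((\cm\cap\cn)[W]\big)\ge |W|/(p+q)$ for every $W$, then invoke the $\chi_\ell\le\lceil\Delta_\eta\rceil$ criterion. The restriction step $(\cm\cap\cn)[W]=\cm[W]\cap\cn[W]$ with $\chi(\cm[W])\le p$, $\chi(\cn[W])\le q$ is also correct and is what the paper does.

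The genuine gap is in your guess for the parameter $\psi$. A ``minmax over bipartitions $V=A\sqcup B$ of some expression involving $r_\cm(A)$ and $r_\cn(B)$'' is, up to weighting, the Edmonds matroid-intersection formula; its value is essentially $\nu_{1,1}(\cm,\cn)$, the size of a largest common independent set. But $\eta(\cm\cap\cn)\ge\nu_{1,1}(\cm,\cn)$ is \emph{false}: for the matching complex of a $4$-cycle (the paper's own tightness example) one has $\nu_{1,1}=2$ while $\eta=1$. So no rank-bipartition parameter can serve as the lower bound for $\eta$, and your averaging argument in the last paragraph, while arithmetically valid, is attached to a quantity that does not control connectivity.

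The paper's parameter is instead
\[
\nu_{p,q}(\cm,\cn)=\max_{\substack{A_1,\dots,A_p\in\cm\\ B_1,\dots,B_q\in\cn}}\ \sum_{v\in V}\min\big(\#v(A_1,\dots,A_p),\,\#v(B_1,\dots,B_q)\big),
\]
a \emph{maximum} over $(p+q)$-tuples of independent sets rather than a minimum over bipartitions. The combinatorial step is then trivial (covers witness $\nu_{p,q}\ge|V|$), and all the work goes into $\eta(\cm\cap\cn)\ge\nu_{p,q}/(p+q)$. That bound is not proved by a static nerve or cover argument as you propose; it is an induction on $|V|$ using a hypergraph extension of Meshulam's delete/contract inequality: one repeatedly removes or contracts \emph{circuits} of one matroid that are independent in the other (\refT{thm:gamestep} and \refT{thm:colooporcontract}), tracking how $\nu_{p,q}$ drops under contraction. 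A separate combinatorial lemma (\refL{dangling}) supplies the special element $z$ with $\#z(X_1,\dots,X_p)=p$ needed to make the induction go through, and it is precisely here that the divisibility hypothesis of the 2006 paper is eliminated.
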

	
	Together with a recent result in~\cite{intersectionofmatroids}, our proof also leads to the same upper bound on the list chromatic number.
	
	\begin{thm}\label{thm:chiellsum}
		For two matroids~$\cm$ and~$\cn$ on the same ground set,
		\[  \chi_\ell(\cm\cap\cn)\le \chi(\cm)+\chi(\cn). \]
	\end{thm}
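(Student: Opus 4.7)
The plan is to obtain Theorem~\ref{thm:chiellsum} as a direct consequence of the topological machinery underlying Theorem~1.2 together with a list-coloring transfer result from \cite{intersectionofmatroids}. The abstract indicates that Theorem~1.2 is not proved by an ad hoc partition argument but rather through a new lower bound on the topological connectivity $\eta(\cm \cap \cn)$ of the intersection complex; this same connectivity bound should be the input to the list-coloring argument.

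Concretely, I would proceed in three steps. First, extract from the (forthcoming) proof of Theorem~1.2 the explicit topological connectivity inequality it establishes for $\cm \cap \cn$ in terms of $\chi(\cm)$ and $\chi(\cn)$. Second, invoke the result of \cite{intersectionofmatroids}, which asserts that a sufficiently strong lower bound on the topological connectivity of the intersection of two matroids implies an upper bound on its list chromatic number, matching the one it implies for the ordinary chromatic number. Third, combine these two ingredients: the connectivity estimate from step one plugged into the transfer theorem from step two yields $\chi_\ell(\cm \cap \cn) \le \chi(\cm)+\chi(\cn)$.

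The main obstacle, and really the only nontrivial step, is the compatibility check in step two: one must verify that the exact form of the connectivity bound produced by the new parameter introduced for Theorem~1.2 meets the hypotheses of the list-coloring theorem in \cite{intersectionofmatroids}. If the bound in \cite{intersectionofmatroids} is phrased in terms of an inequality such as $\eta(\cm \cap \cn) \geq |V|/k$ for some $k$, one has to confirm that the parameter used here yields a bound of precisely that shape with $k = \chi(\cm)+\chi(\cn)$. If not, a mild strengthening or reformulation of the connectivity computation may be required so that both the chromatic and list chromatic conclusions follow uniformly. Once the match is established, no further combinatorial work is needed: the proof of Theorem~\ref{thm:chiellsum} reduces to a one-line application of the list-coloring transfer theorem to the connectivity bound behind Theorem~1.2.
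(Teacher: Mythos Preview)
Your plan is exactly the paper's approach: derive a connectivity lower bound from the new parameter $\nu_{p,q}$ (Theorem~\ref{thm:etanupq}), then feed it into the list-coloring transfer $\chi_\ell(\cc)\le\lceil\Delta_\eta(\cc)\rceil$ from \cite{intersectionofmatroids} (Theorem~\ref{thm:Deltaeta}). There is no hidden reformulation needed; the compatibility check you flag goes through directly.

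The one point you should sharpen is the shape of the hypothesis in step two. The transfer theorem is stated in terms of $\Delta_\eta(\cc)=\max_{\emptyset\neq S\subseteq V}|S|/\eta(\cc[S])$, so a single inequality $\eta(\cm\cap\cn)\ge |V|/k$ is not enough: you need the hereditary version $\eta\big((\cm\cap\cn)[S]\big)\ge |S|/k$ for every nonempty $S\subseteq V$. In the paper this is a one-line observation: since $\chi(\cm[S])\le\chi(\cm)$ and $\chi(\cn[S])\le\chi(\cn)$, the covering argument giving $\nu_{p,q}(\cm,\cn)\ge|V|$ restricts verbatim to any $S$, and Theorem~\ref{thm:etanupq} applied to $\cm[S],\cn[S]$ yields $\eta\big((\cm\cap\cn)[S]\big)\ge |S|/(p+q)$ with $p=\chi(\cm)$, $q=\chi(\cn)$. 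Hence $\Delta_\eta(\cm\cap\cn)\le p+q$ and Theorem~\ref{thm:Deltaeta} finishes. Your outline is correct once this hereditary step is made explicit.
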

	
	The structure of the paper is the following: in~\refS{sec:operators} we introduce some notation to state our results later. In~\refS{sec:top} we find a lower bound on the topological connectivity (see \refT{thm:colooporcontract}). In~\refS{sec:comb}, we introduce a combinatorial parameter. In~\refS{sec:topvscomb} we find a connection between the topological connectivity and the combinatorial parameter (see~\refT{thm:etanupq}). In~\refS{sec:proof}, we prove \refT{thm:chiellsum} by using the relations between (list) chromatic number, topological connectivity, and the combinatorial parameter.
	
	\section{Circuit representation of a matroid and some hypergraph operations}\label{sec:operators}
	
	The reversed operator for $\ci$ shown in~\refS{sec:intro} can be defined as follows.
	
	\begin{definition}
		Let $\cc$ be a complex on the ground set $V$. We write
		\[circ(\cc) = \{e \subseteq V : e \not\in \cc, \text{ and for every } x \in e,\; e \setminus \{x\} \in \cc \}.\]
	\end{definition}
	
	\begin{observation}
		\label{complexhypergraphduality}
		If $\cc$ is a complex on $V$ then $\cc = \ci((V,circ(\cc)))$.
	\end{observation}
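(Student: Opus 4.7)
The plan is to prove the equality $\cc = \ci((V, circ(\cc)))$ by a direct double-inclusion argument, leveraging only the definitions of $circ$, of $\ci$, and the fact that $\cc$ is closed under taking subsets.

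For the inclusion $\cc \subseteq \ci((V, circ(\cc)))$, I would take an arbitrary $S \in \cc$ and argue by contradiction: suppose $S$ were not independent in the hypergraph $(V, circ(\cc))$, so some $e \in circ(\cc)$ satisfies $e \subseteq S$. Since $\cc$ is a complex and $S \in \cc$, closure under subsets yields $e \in \cc$, contradicting the defining condition $e \notin \cc$ for membership in $circ(\cc)$.

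For the reverse inclusion $\ci((V, circ(\cc))) \subseteq \cc$, I would prove the contrapositive: if $S \notin \cc$, then $S$ contains some edge of $circ(\cc)$. Choose $e \subseteq S$ to be a subset of $S$ of minimum cardinality with $e \notin \cc$ (such an $e$ exists because $S$ itself is a candidate). By minimality, for every $x \in e$ we have $e \setminus \{x\} \in \cc$, and by choice $e \notin \cc$; these are exactly the conditions defining $e \in circ(\cc)$. Since $e \subseteq S$, the set $S$ is not independent in $(V, circ(\cc))$.

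There is no real obstacle here: the statement is essentially a tautology unpacking the definition of $circ$ as the set of minimal forbidden subsets, and the only ingredient used beyond the definitions is that complexes are downward closed. The only point requiring any care is the existence of a minimal non-member inside $S$, which is immediate from finiteness of $V$.
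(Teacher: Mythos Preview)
Your argument is correct and is the standard verification that the minimal non-faces of a complex recover the complex via $\ci$. The paper states this as an observation without proof, so there is nothing further to compare; your double-inclusion via downward closure and a minimal non-member is exactly the intended justification.
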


	For a matroid~$\cm$ on the ground set~$V$, it is well-known that for the hypergraph $H=(V,E)$ with $E=circ(\cm)$,  the following properties hold:
	\begin{itemize}
		\item $\emptyset \not \in E$,
		\item there are no two distinct edges $C_1,C_2 \in E$ such that $C_1 \subseteq C_2$, and
		\item (circuit elimination property) for every two edges $C_1,C_2 \in E$ and every $u \in C_1 \cap C_2$ and $v \in C_1 \setminus C_2$ there exists $C_3 \in E$, such that $C_3 \subseteq C_1 \cup C_2$ and $v \in C_3$ and $u \not\in C_3$.
	\end{itemize}
	We call such~$H$ a \emph{circuit representation} of the matroid~$\cm$, and an element of~$E=circ(\cm)$ is called a \emph{circuit} of the matroid~$\cm$. On the other hand, it is well-known that for any hypergraph $H=(V,E)$ satisfying the above properties,~$\ci(H)$ is a matroid, of which~$H$ is a circuit representation.

	\begin{definition}
		Let $H = (V,E)$ be a hypergraph.
		For an edge $e \in E$, we write 
		\[H - e = (V,E \setminus \{e\}).\]
		For a set $X \subseteq V$, we write 
		\begin{align*}
			H[X] & = (X, \;\{e\in E \mid e\subseteq X \}),\\
			H / X &= (V \setminus X , 
			\{e \setminus X \mid e \in E,\; e\not\subseteq X\}),\\
			H \setminus X &= 
			(V \setminus X,\; \{e \in E \mid e \cap X = \emptyset \}),\\
			H \sim X &= 
			(V ,\; \{e \in E \mid e \cap X = \emptyset \}) .
		\end{align*}
		If $v \in V$ then we write $H \sim v = H \sim \{v\}$.
	\end{definition}
	Note that $H \setminus X$ and $H \sim X$ differ only by their vertex sets, and $H[X]=H\setminus (V\setminus X)$.
	
	When applying the operators~$[ \;\;]$,~$/$, and~$\sim$ to a matroid, we refer to the application of these operators on the circuit representation of the matroid. Formally, if~$\cm$ is a matroid on the ground set~$V$, whose circuit representation is $H = (V,circ(\cm))$, and if $X \subseteq V$, then we define 
	\[\cm[X]=\ci(H[X]),\quad  \cm / X = \ci(H / X),\quad\text{and}\quad \cm\sim X =\ci(H\sim X),\]
	and if $v \in V$ then we define
	\[\cm \sim v = \ci(H \sim v).\]
	
	Note that $\cm[X]$, $\cm / X$, and $\M\sim X$ in the above definitions are matroids, and they coincide with the usual definitions of the restriction of~$\cm$ to~$X$, the contraction of~$X$ from~$\cm$, and the join (direct sum) $2^X*\cm[V\setminus X]$ (see \refS{sec:top} for the definition), respectively.

	\section{The topological parameter $\eta$}\label{sec:top}
	
	Let $\cc$ be an abstract simplicial complex. 
	Assuming we fix some ring $R$, we can apply homology theory on $\cc$. We write \emph{(homological) connectivity} $\eta(\cc)$ for the minimal value of $k$ such that the reduced homology $\tilde{H}_{k-1}(\cc,R)$ does not vanish. If $\cc$ is the empty complex then we write $\eta(\cc) = 0$ and if all reduced homology groups of $\cc$ vanish then we write $\eta(\cc) = \infty$. See, e.g.,~\cite[Section 2]{matcomp}, for the geometric meaning of the connectivity.
	
	For two complexes~$\cc,\cd$ on disjoint sets, the \emph{join}  $\cc*\cd$ is $\{S\cup T\mid S\in\cc, T\in\cd\}$.
	\begin{thm}\cite{matcomp}\label{thm:etajoin}
		Let $\cc$ and $\cd$ be complexes on disjoint sets. Then
		\[ \eta(\cc*\cd)\ge\eta(\cc)+\eta(\cd). \]
	\end{thm}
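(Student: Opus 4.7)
My plan is to prove the inequality by reducing to the Künneth formula through a chain-level identification of the join. Every simplex of $\cc * \cd$ is uniquely a disjoint union of a (possibly empty) face of $\cc$ and a (possibly empty) face of $\cd$. Working with augmented chain complexes $\tilde C_*$, in which the empty simplex sits in degree $-1$, this decomposition yields a natural isomorphism
\[ \tilde C_n(\cc * \cd) \;\cong\; \bigoplus_{i+j = n-1} \tilde C_i(\cc) \otimes \tilde C_j(\cd), \]
which, with the standard Koszul sign convention, upgrades to an isomorphism of chain complexes $\tilde C_*(\cc * \cd) \cong \bigl(\tilde C_*(\cc) \otimes \tilde C_*(\cd)\bigr)[1]$, i.e.\ a degree shift by one.

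With this in hand I would invoke the Künneth formula (assuming the coefficient ring $R$ is a PID, as is implicit in the definition of homological connectivity used here) to obtain
\[ \tilde H_n(\cc * \cd) \;\cong\; \bigoplus_{i+j=n-1} \tilde H_i(\cc) \otimes \tilde H_j(\cd) \;\oplus\; \bigoplus_{i+j=n-2} \operatorname{Tor}\bigl(\tilde H_i(\cc), \tilde H_j(\cd)\bigr). \]
By the definition of $\eta$, we have $\tilde H_i(\cc) = 0$ for $i \le \eta(\cc) - 2$ and $\tilde H_j(\cd) = 0$ for $j \le \eta(\cd) - 2$, so any non-zero summand on the right forces $i \ge \eta(\cc) - 1$ and $j \ge \eta(\cd) - 1$. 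This gives $n - 1 \ge \eta(\cc) + \eta(\cd) - 2$ in the tensor part and $n - 2 \ge \eta(\cc) + \eta(\cd) - 2$ in the Tor part; either way $n \ge \eta(\cc) + \eta(\cd) - 1$. Equivalently $\tilde H_{k-1}(\cc * \cd) = 0$ for every $k \le \eta(\cc) + \eta(\cd) - 1$, which is exactly the assertion $\eta(\cc * \cd) \ge \eta(\cc) + \eta(\cd)$.

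The main technical hurdle is the chain-level step: setting up the isomorphism with consistent Koszul signs, so that the boundary operator on $\tilde C_*(\cc * \cd)$ matches the tensor-product differential after the degree shift. Once that is pinned down the Künneth step and the vanishing analysis are routine. Degenerate cases where $\eta(\cc)$ or $\eta(\cd)$ equals $0$ or $\infty$ fit into the same framework, since then every summand on the right either vanishes or imposes a bound that is trivially satisfied.
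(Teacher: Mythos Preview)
The paper does not supply its own proof of this statement; it is quoted from \cite{matcomp} without argument, so there is nothing in the present paper to compare against. Your approach---identifying the augmented chain complex of the join with the shifted tensor product $\bigl(\tilde C_*(\cc)\otimes\tilde C_*(\cd)\bigr)[1]$ and then reading off the vanishing range from the K\"unneth formula---is the standard and correct proof of this inequality for homological connectivity, and your bookkeeping on the tensor and Tor summands is accurate. The PID hypothesis on $R$ that you flag is exactly what is needed for the K\"unneth short exact sequence; since the paper fixes ``some ring $R$'' without further comment, it is reasonable to assume this is intended (and in applications $R$ is typically $\mathbb{Z}$ or a field).
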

	
	For two complexes on the same set, we can prove the following inequalities using the Mayer–Vietoris sequence:
	
	\begin{thm}
		Let $\ca$ and $\cb$ be two abstract simplicial complexes on the same set $V$. Then
		
		\begin{enumerate}
			\label{unionintersection}
			\item\label{eq:1} $\eta(\ca \cup \cb) \geq \min\Big(\eta(\ca),\; \eta(\cb),\; \eta(\ca \cap \cb) + 1\Big)$.
			\item $\eta(\ca \cap \cb) \geq \min\Big(\eta(\ca),\; \eta(\cb), \;\eta(\ca \cup \cb) - 1\Big)$.
			\item\label{eq:3} $\eta(\ca) \geq \min\Big(\eta(\ca\cup \cb), \; \eta(\ca\cap \cb)\Big)$.
		\end{enumerate}
	\end{thm}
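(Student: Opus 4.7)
The plan is to apply the reduced Mayer--Vietoris long exact sequence
\[
\cdots \to \tilde{H}_n(\ca\cap\cb) \to \tilde{H}_n(\ca)\oplus\tilde{H}_n(\cb) \to \tilde{H}_n(\ca\cup\cb) \to \tilde{H}_{n-1}(\ca\cap\cb) \to \cdots
\]
(valid since $\ca,\cb$ are subcomplexes of $\ca\cup\cb$ with intersection $\ca\cap\cb$) and, for each of the three inequalities, squeeze a homology group between neighbors that vanish by hypothesis. Throughout I use the reformulation of the definition: $\eta(\cc)\ge N$ is equivalent to $\tilde{H}_i(\cc)=0$ for all $i\le N-2$.

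For (1), set $N=\min(\eta(\ca),\eta(\cb),\eta(\ca\cap\cb)+1)$ and fix $i\le N-2$. Then $i\le\eta(\ca)-2$ and $i\le\eta(\cb)-2$ give $\tilde{H}_i(\ca)=\tilde{H}_i(\cb)=0$, while $i-1\le\eta(\ca\cap\cb)-2$ gives $\tilde{H}_{i-1}(\ca\cap\cb)=0$. The three-term Mayer--Vietoris segment
\[
\tilde{H}_i(\ca)\oplus\tilde{H}_i(\cb)\to\tilde{H}_i(\ca\cup\cb)\to\tilde{H}_{i-1}(\ca\cap\cb)
\]
then forces $\tilde{H}_i(\ca\cup\cb)=0$, proving $\eta(\ca\cup\cb)\ge N$. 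Inequality (2) is completely symmetric: with $N'=\min(\eta(\ca),\eta(\cb),\eta(\ca\cup\cb)-1)$ and $i\le N'-2$, the vanishing of $\tilde{H}_{i+1}(\ca\cup\cb)$ and $\tilde{H}_i(\ca)\oplus\tilde{H}_i(\cb)$ squeezes $\tilde{H}_i(\ca\cap\cb)$ to zero via the segment $\tilde{H}_{i+1}(\ca\cup\cb)\to\tilde{H}_i(\ca\cap\cb)\to\tilde{H}_i(\ca)\oplus\tilde{H}_i(\cb)$.

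For (3), the sharper point is that the four-term segment
\[
\tilde{H}_i(\ca\cap\cb)\xrightarrow{f}\tilde{H}_i(\ca)\oplus\tilde{H}_i(\cb)\xrightarrow{g}\tilde{H}_i(\ca\cup\cb)
\]
is exact at the middle term. With $N''=\min(\eta(\ca\cup\cb),\eta(\ca\cap\cb))$ and $i\le N''-2$, both outer groups vanish, so $f=0$ and $g=0$; exactness $\ker g=\operatorname{im} f=0$ combined with $g$ mapping into the zero group forces $\tilde{H}_i(\ca)\oplus\tilde{H}_i(\cb)=0$, in particular $\tilde{H}_i(\ca)=0$. Hence $\eta(\ca)\ge N''$ (and simultaneously $\eta(\cb)\ge N''$, though only the former is asserted).

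The main obstacle is purely bookkeeping: making sure the edge conventions (the empty complex has $\eta=0$; acyclic complexes have $\eta=\infty$; the degree shift in the reduced Mayer--Vietoris sequence) are tracked correctly, so that the inequalities remain vacuously or automatically true when some of $\eta(\ca),\eta(\cb),\eta(\ca\cap\cb),\eta(\ca\cup\cb)$ equals $0$ or $\infty$. Once one agrees that $\tilde{H}_{-1}(\cd)=0$ for every nonempty complex $\cd$ and that $\eta=\infty$ trivially dominates the minima, no further subtlety arises; the entire argument is three applications of the same exactness squeeze.
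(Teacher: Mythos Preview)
Your proposal is correct and follows exactly the approach the paper indicates: the paper states these inequalities and simply remarks that they ``can be proved using the Mayer--Vietoris sequence,'' without spelling out the details. Your write-up is precisely that spelled-out argument, with the appropriate bookkeeping for the degree shifts and the $\eta=0,\infty$ edge cases.
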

	
	Using inequality~\eqref{eq:1} of \refT{unionintersection} for $\ca=\ci(H\setminus\{v\})$ and~$\cb=2^{\{v\}}*\ci\Big(H\setminus\big(\{v\}\cup N_H(v)\big)\Big)$, where $2^S$ is the power set of the set~$S$ and $N_H(v)$ is the set of neighbors of~$v$ in~$H$, we can deduce the following:
	\begin{thm}
		\label{thm:vertexlink}
		Let $H = (V,E)$ be a hypergraph and let $v$ be a vertex of $H$ such that $\{v\} \not\in E$. Then 
		\[\eta(\ci(H)) \geq \min\Big(\eta\big(\ci(H \setminus \{v\})\big),\;
		\eta\big(\ci(H / \{v\})\big) + 1\Big).\]
	\end{thm}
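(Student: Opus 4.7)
The plan is to apply the Mayer--Vietoris inequality \eqref{eq:1} of \refT{unionintersection} to the decomposition of $\ci(H)$ into the antistar and the closed star of the vertex $v$. Take $\ca = \ci(H\setminus\{v\})$, the subcomplex of independent sets not containing $v$ (extended to the common ground set $V$ by treating $v$ as a vertex in no face), and take $\cb$ to be the closed star of $v$ in $\ci(H)$, namely the faces $S$ with $S \cup \{v\} \in \ci(H)$. The hypothesis $\{v\} \notin E$ makes $\{v\}$ itself a face of $\ci(H)$, so $\cb$ can be rewritten as the join $2^{\{v\}} \ast \ci(H/\{v\})$; in particular $\cb$ is a cone with apex $v$, hence contractible, and $\eta(\cb) = \infty$.

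Next I would check the two set-theoretic identities needed to feed into \eqref{eq:1}. First, $\ci(H) = \ca \cup \cb$: any independent set either avoids $v$ (placing it in $\ca$) or contains $v$, in which case $S \cup \{v\} = S \in \ci(H)$ places $S$ in $\cb$. Second, $\ca \cap \cb = \ci(H/\{v\})$: its faces are precisely the independent sets not containing $v$ that can be enlarged by $v$ while remaining independent, which by the description of $H/\{v\}$ in \refS{sec:operators} is exactly $\ci(H/\{v\})$. Substituting into \eqref{eq:1} and using $\eta(\cb) = \infty$ gives
\[ \eta(\ci(H)) \;=\; \eta(\ca \cup \cb) \;\ge\; \min\big(\eta(\ca),\,\eta(\cb),\,\eta(\ca \cap \cb)+1\big) \;=\; \min\big(\eta(\ci(H\setminus\{v\})),\; \eta(\ci(H/\{v\}))+1\big), \]
which is the bound we want.

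I do not anticipate a significant obstacle: contractibility of $\cb$ is immediate from the cone structure, and the two identities above are forced by the definitions of $H\setminus\{v\}$ and $H/\{v\}$ recalled in \refS{sec:operators}. The only routine housekeeping is enlarging the three complexes to the common ground set $V$ before invoking \refT{unionintersection}, which does not affect their reduced homology and hence does not affect $\eta$. The role of the hypothesis $\{v\}\notin E$ is precisely to guarantee that $\{v\}$ is a vertex of $\ci(H)$, so that $\cb$ is a genuine (non-empty) cone.
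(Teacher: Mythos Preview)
Your proof is correct and follows the same antistar/closed-star Mayer--Vietoris decomposition that the paper sketches. Your choice $\cb = 2^{\{v\}} * \ci(H/\{v\})$ is in fact the right formulation for general hypergraphs; the paper's hint $\cb = 2^{\{v\}} * \ci\big(H \setminus (\{v\} \cup N_H(v))\big)$ is phrased with graphs in mind, where the two coincide.
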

	
	Using inequality~\eqref{eq:3} of \refT{unionintersection} for $\ca=\ci(H)$ and $\cb=2^e*\ci(H/e)$  we can also deduce the following (see~\cite[Section 8.3]{intersectionofmatroids} for the details of the proof):
	\begin{thm}
		\label{thm:gamestep}
		Let $H$ be a hypergraph and let $e$ be an edge of $H$ which does not contain any other edge. Then 
		\[\eta(\ci(H)) \geq \min\Big(\eta\big(\ci(H-e)\big),\;
		\eta\big(\ci(H / e)\big) + |e|-1\Big).\]
	\end{thm}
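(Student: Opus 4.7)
The plan is to apply inequality~\eqref{eq:3} of \refT{unionintersection} exactly as suggested, namely with $\ca=\ci(H)$ and $\cb=2^e*\ci(H/e)$, where the join is taken by regarding $2^e$ as a simplex on the vertex set $e$ and $\ci(H/e)$ as a complex on $V\setminus e$, so that $\cb$ is a complex on the full vertex set $V$. The entire argument reduces to identifying $\ca\cup\cb$ and $\ca\cap\cb$ in terms of standard objects whose connectivities we already know how to bound.

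The first step is to show $\ca\cup\cb=\ci(H-e)$. The crucial hypothesis is that $e$ contains no other edge, so the edges of $H/e$ are exactly $\{f\setminus e : f\in E,\, f\neq e\}$. The inclusion $\ca\subseteq\ci(H-e)$ is immediate, and a short check shows $\cb\subseteq\ci(H-e)$: any $X=S\cup T$ with $S\subseteq e$ and $T\in\ci(H/e)$ cannot contain an edge $f\neq e$, since $f\subseteq X$ would force $f\setminus e\subseteq T$. For the reverse inclusion, given $X\in\ci(H-e)$, either $e\not\subseteq X$ (then $X\in\ci(H)=\ca$) or $e\subseteq X$; in the latter case, writing $X=e\cup(X\setminus e)$ shows $X\setminus e\in\ci(H/e)$ (otherwise $X$ would contain some $f\neq e$), so $X\in\cb$.

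The second step is to show $\ca\cap\cb=(\partial\Delta_e)*\ci(H/e)$, where $\partial\Delta_e$ is the boundary complex of the simplex on vertex set $e$, i.e.\ the set of all proper subsets of $e$. Note that $X\in\cb$ is equivalent to $X\setminus e\in\ci(H/e)$ with no constraint on $X\cap e$. If additionally $X\in\ca=\ci(H)$, then $e\not\subseteq X$, i.e.\ $X\cap e\subsetneq e$. Conversely, if $X\setminus e\in\ci(H/e)$ and $X\cap e\subsetneq e$, then no edge $f$ of $H$ lies in $X$: the case $f=e$ is ruled out directly, and for $f\neq e$ we have $f\setminus e\not\subseteq X\setminus e$. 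Hence $\ca\cap\cb$ is exactly the set of $S\cup T$ with $S\subsetneq e$ and $T\in\ci(H/e)$, which is $(\partial\Delta_e)*\ci(H/e)$.

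The final step is a homology computation and bookkeeping. Since $\partial\Delta_e$ is the $(|e|-2)$-sphere, $\eta(\partial\Delta_e)=|e|-1$, so by \refT{thm:etajoin},
\[\eta(\ca\cap\cb)\ge\eta(\partial\Delta_e)+\eta(\ci(H/e))=(|e|-1)+\eta(\ci(H/e)).\]
Combining this with $\eta(\ca\cup\cb)=\eta(\ci(H-e))$ and inequality~\eqref{eq:3} of \refT{unionintersection},
\[\eta(\ci(H))=\eta(\ca)\ge\min\bigl(\eta(\ca\cup\cb),\,\eta(\ca\cap\cb)\bigr)\ge\min\bigl(\eta(\ci(H-e)),\,\eta(\ci(H/e))+|e|-1\bigr),\]
as claimed. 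The only subtle point is the verification that $\ca\cup\cb=\ci(H-e)$, and this is precisely where the hypothesis that $e$ contains no other edge is needed; everything else is a direct application of the Mayer--Vietoris inequality and the join bound.
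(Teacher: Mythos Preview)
Your proof is correct and follows exactly the approach indicated in the paper, namely applying inequality~\eqref{eq:3} of \refT{unionintersection} with $\ca=\ci(H)$ and $\cb=2^e*\ci(H/e)$; you have supplied precisely the details the paper omits (and defers to~\cite[Section~8.3]{intersectionofmatroids}), identifying $\ca\cup\cb=\ci(H-e)$ and $\ca\cap\cb=(\partial\Delta_e)*\ci(H/e)$ and then invoking \refT{thm:etajoin}. One small remark: the hypothesis that $e$ contains no other edge is in fact also used in your verification of $\ca\cap\cb$ (to ensure that $f\setminus e$ is an edge of $H/e$ whenever $f\neq e$), not only in the $\ca\cup\cb$ step.
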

	
	\refT{thm:gamestep} was proved in \cite{meshulam} for the case that $H$ is a graph, but the same proof holds for general hypergraphs as well. While \refT{thm:gamestep} is extensively used for graphs, its use for hypergraphs is less common so far. It is used implicitly in \cite{digraphs}.

	Repeatedly applying \refT{thm:vertexlink} and \refT{thm:gamestep} gives a very powerful tool for obtaining lower bounds for $\eta$.

	\begin{thm}
		\label{thm:colooporcontract}
		Let $\cm_1, \ldots, \cm_k$ be matroids on the ground set $V$, and let $v 
		\in V$. Then either 
		\[\eta(\cm_1 \cap \ldots \cap \cm_k) \geq 
		\eta\Big((\cm_1 \sim v) \cap \cm_2 \cap \ldots \cap \cm_k\Big)\] 
		or there exists $C$ such that
		\begin{enumerate}[(i)]
			\item $v \in C$.
			\item $C$ is a circuit of $\cm_1$.
			\item $C \in \bigcap_{i=2}^k \cm_i$.
			\item $\eta(\cm_1 \cap \ldots \cap \cm_k) \geq 
			\eta\Big((\cm_1  / C) \cap \ldots \cap (\cm_k / C)\Big) + |C| - 1$.
		\end{enumerate}
	\end{thm}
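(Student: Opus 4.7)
The plan is to realize $\cm_1 \cap \cdots \cap \cm_k$ as $\ci(H)$ for the hypergraph $H = (V,E)$ with $E = \bigcup_i circ(\cm_i)$, and to apply \refT{thm:gamestep} iteratively along certain $\cm_1$-circuits through $v$. Let $\mathcal{Z}$ denote the set of $\cm_1$-circuits that contain $v$ and are not $\cm_\ell$-circuits for any $\ell \ge 2$; note that $(\cm_1 \sim v) \cap \cm_2 \cap \cdots \cap \cm_k = \ci(H - \mathcal{Z})$. Enumerate $\mathcal{Z} = \{C_1, \ldots, C_m\}$ in any order and set $H_j = H - \{C_1, \ldots, C_j\}$, so $H_m = H - \mathcal{Z}$. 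At each step $j$ I will prove either $\eta(\ci(H_{j-1})) = \eta(\ci(H_j))$ or
\[\eta(\ci(H_{j-1})) \ge \min\Bigl(\eta(\ci(H_j)),\; \eta(\ci(H/C_j)) + |C_j| - 1\Bigr),\]
with $C_j$ satisfying conditions (i)--(iii) of the theorem in the latter case. Telescoping these step bounds yields $\eta(\ci(H)) \ge \min\bigl(\eta(\ci(H_m)),\, \min_{j\in J} \eta(\ci(H/C_j)) + |C_j| - 1\bigr)$, where $J$ indexes the steps of the second type. Since $\ci(H/C_j) = (\cm_1/C_j)\cap\cdots\cap(\cm_k/C_j)$, whichever term attains the overall minimum yields alternative (i) or alternative (ii) respectively.

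At step $j$, if $C_j$ properly contains some other edge $e$ of $H_{j-1}$, then by incomparability of $\cm_1$-circuits $e \in circ(\cm_\ell)$ for some $\ell \ge 2$, so $e \notin \mathcal{Z}$ is never removed; every independent set of $H_{j-1}$ avoids $e$ and hence avoids $C_j$, giving $\ci(H_{j-1}) = \ci(H_j)$. Otherwise $C_j$ is a minimal edge of $H_{j-1}$, so \refT{thm:gamestep} yields $\eta(\ci(H_{j-1})) \ge \min(\eta(\ci(H_j)), \eta(\ci(H_{j-1}/C_j)) + |C_j|-1)$. In this case minimality also implies $C_j \in \bigcap_{i \ge 2} \cm_i$: if some proper subset of $C_j$ were a $\cm_\ell$-circuit with $\ell \ge 2$, it would be an edge of $H_{j-1}$ (it is not in $\mathcal{Z}$ since it is not a $\cm_1$-circuit), contradicting minimality; and $C_j$ itself is not a $\cm_\ell$-circuit by $C_j \in \mathcal{Z}$. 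Hence conditions (i)--(iii) of the theorem are verified with $C = C_j$.

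The main obstacle is the identity $\ci(H_{j-1}/C_j) = \ci(H/C_j)$, needed to match the contract bound with alternative (iv). The $\supseteq$ direction is immediate because $H/C_j$ has more edges; for $\subseteq$, I show that every extra edge $C_i \setminus C_j$ ($i < j$) of $H/C_j$ contains some edge of $H_{j-1}/C_j$. Pick any $u \in C_i \setminus C_j$ (nonempty by $\cm_1$-circuit incomparability) and apply the circuit elimination axiom to the $\cm_1$-circuits $C_i$ and $C_j$ at $v \in C_i \cap C_j$ and $u \in C_i \setminus C_j$: this produces a $\cm_1$-circuit $C' \subseteq C_i \cup C_j$ with $u \in C'$ and $v \notin C'$. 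Since $v \notin C'$ we have $C' \notin \mathcal{Z}$, so $C'$ survives in $H_{j-1}$; and $C' \not\subseteq C_j$ (because $u \in C' \setminus C_j$), so $C' \setminus C_j$ is an edge of $H_{j-1}/C_j$ contained in $C_i \setminus C_j$, as required.
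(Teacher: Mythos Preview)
Your proof is correct and follows essentially the same route as the paper: realize $\bigcap_i \cm_i$ as $\ci(H)$ for $H=(V,\bigcup_i circ(\cm_i))$, iterate \refT{thm:gamestep} over a list of $\cm_1$-circuits through $v$, and use the circuit elimination axiom to show that contracting at step $j$ yields the same complex as contracting in the original $H$. The only noteworthy variation is that your set $\mathcal{Z}$ (circuits through $v$ that are not themselves $\cm_\ell$-circuits for $\ell\ge 2$) is a priori larger than the paper's list (circuits through $v$ that are \emph{independent} in every $\cm_\ell$, $\ell\ge 2$); you compensate with the extra case split ``$C_j$ properly contains another edge'', where such $C_j$ is redundant and can be dropped without changing $\ci$, whereas the paper simply never enumerates those circuits. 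Either choice works and the remaining verifications (your analogues of the paper's three displayed identities) match.
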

	
	\begin{proof}
		Let $\ch_i=(V,E_i)=(V,circ(\M_i))$ be the circuit representation of~$\M_i$ for $i=1,\dots, k$.
		For $I\subseteq \{1,\dots, k\}$, let $\cup_{i\in I}\ch_i=(V,\cup_{i\in I}E_i)$ and let
		$\ch=\cup_{i=1}^k\ch_i$. So $\I(\ch)=\cap_{i=1}^k\M_i$.
		
		Let $C_1,\dots,C_t$ be all the circuits of~$\M_1$ satisfying that $v\in C_j$ and $C_j\in \cap_{i=2}^k\M_i$ for each $1\le j\le t$.
		
		\begin{claim}
			\begin{gather}
				\I(\ch-C_1-\cdots -C_t)= (\M_1\sim v)\cap\M_2\cap\cdots\cap\M_k,   \label{eq:eq1}\\
				\I\big(  (\ch-C_1-\cdots-C_{j-1})/C_j  \big)   =\I(\ch/C_j) \quad\text{for each $1\le j\le t$}, \label{eq:eq2} \\
				\I(\ch/C_j)= (\M_1/C_j)\cap\cdots\cap(\M_k/C_j) \quad\text{for each $1\le j\le t$}.\label{eq:eq3} 
			\end{gather} 
		\end{claim}
		Suppose the claim is true, then applying~\refT{thm:gamestep} repeatedly, where each $C_j$ in turn takes the role of~$e$ in the theorem, yields either by~\eqref{eq:eq1} that
		\[  \eta(\I(\ch))\ge \eta\Big(\I(\ch-C_1-\cdots -C_t)\Big)=\eta\Big( (\M_1\sim v)\cap\M_2\cap\cdots\cap\M_k\Big),  \]
		or for some $1\le j\le t$ by~\eqref{eq:eq2} and~\eqref{eq:eq3} that
		\begin{align*}
			\eta(\I(\ch))&\ge \eta\Big(\I\big(  (\ch-C_1-\cdots-C_{j-1})/C_j  \big)    \Big) +|C_j|-1\\
			&=\eta\Big(\I(\ch/C_j) \Big)+|C_j|-1\\
			&=\eta\Big( (\M_1/C_j)\cap\cdots\cap(\M_k/C_j) \Big)+|C_j|-1,
		\end{align*}    
		in which case setting~$C=C_j$ completes the proof.
	\end{proof}
	\begin{proof}[Proof of the claim]
		To verify~\eqref{eq:eq1}, for~$S$ in the LHS, since for each $1\le j\le t$, $C_j\in\cap_{i=2}^k\cm_i$, which implies that~$C_j$ is not an edge of $\cup_{i=2}^k\ch_i$, then~$S$ does not contain any edge of $\cup_{i=2}^k\ch_i$, therefore $S\in\cap_{i=2}^k\M_i$. And $S\setminus\{v\}$ does not contain any edge $f\in\ch_1$ satisfying $f\subseteq V\setminus\{v\}$, since such $f\neq C_j$ for every $1\le j\le t$. Therefore $S\setminus\{v\}\in\M_1[V\setminus\{v\}]$ and $S\in \M_1\sim v$. Thus the LHS is contained in the RHS.
		For $T$ in the RHS, $T\in\cap_{i=2}^k\M_i$ implies that~$T$ is independent in $\cup_{i=2}^k\ch_i$. And $T$ does not contain any edge of~$\ch_1[V\setminus\{v\}]$.  Furthermore, we claim that~$T$ does not contain any $C\in\ch_1$ such that $v\in C$ and $C\neq C_j$ for every $1\le j\le t$: suppose not, then such $C$ must contain an edge of some~$\ch_i$ for $2\le i\le k$ as $C_1,\dots,C_t$ are all the edges of~$\ch_1$ including~$v$ and containing no edge of $\cup_{i=2}^t\ch_{i}$, which contradicts to the assumption $T\in \M_i$. Therefore $T\in\I(\ch-C_1-\cdots -C_t)$ and then  the RHS is contained in the LHS.
		
		To verify~\eqref{eq:eq2}, the RHS is contained in the LHS, since $(\ch-C_1-\cdots-C_{j-1})/C_j$ and~$\ch/C_j$ have the same vertex set and the edge set of the former is contained in that of the latter. On the other hand, for any $S$ in the LHS, we claim that $S$ does not contain $C_\ell\setminus C_j$ for any $1\le \ell\le j-1$: suppose not, i.e., $C_\ell\setminus C_j\subseteq S$ for some $1\le \ell\le j-1$. Since $v\in C_j\cap C_{\ell}$, then by the circuit elimination property, there exists a circuit $C'$ of~$\M_1$ such that $C'\subseteq (C_j\cup C_\ell)\setminus\{v\}$. Then $S$ contains $C'\setminus C_j$, but~$C'$ is an edge of~$H$ different from any~$C_\ell$  for $1\le \ell\le j-1$ (since $v\not\in C'$), which contradicts with the assumption that $S\in \I\big((\ch-C_1-\dots-C_{j-1})/C_j\big)$. Therefore the claim holds and $S\in\I(H/C_j)$, which proves that the LHS is contained in the RHS.
		
		To verify~\eqref{eq:eq3}, since $C_j\in\cap_{i=2}^k\M_i$, no edge of $\cup_{i=2}^k\ch_i$ is contained in $C_j$. And since~$C_j\in circ(\M_1)$, no edge of~$\ch_1$ other than~$C_j$ itself is contained in~$C_j$. Therefore
		\begin{equation}\label{eq:4}
			\ch/C_j=(\cup_{i=1}^k\ch_i)/C_j = \cup_{i=1}^k(\ch_i/C_j). 
		\end{equation}
		By definition~$\I(\ch_i/C_j)=\M_i/C_j$ for each~$i=1,\dots, k$, therefore together with~\eqref{eq:4},
		$\I(\ch/C_j)= \cap_{i=1}^k\I(\ch_i/C_j) = \cap_{i=1}^k(\M_i/C_j)$.
	\end{proof}

	\section{The combinatorial parameter $\nu_{p,q}(\cm,\cn)$}\label{sec:comb}
	
	For sets $A_1, \ldots, A_k$ and an element $v$, we write $\#v(A_1, \ldots, A_k)$ for the number of sets among $A_1, \ldots, A_k$ to which $v$ belongs. For two positive integers $p,q$ and two matroids $\cm,\cn$ on the same ground set $V$ we define
	
	\[
	\nu_{p,q}(\cm,\cn) = \max_{\substack{A_1, \ldots, A_p \in \cm \\ B_1, \ldots, B_q \in \cn }} \, \sum_{v \in V} \min \big(\#v(A_1, \ldots, A_p),\;\#v(B_1, \ldots, B_q)\big).
	\]
	
	Note that $\nu_{1,1}(\cm,\cn)$ is the size of the largest set which is independent in both $\cm$ and $\cn$. 
	
	\begin{obs}\label{ob:monotone}
		The parameter $\nu$ is monotone in $p$, $q$, $\cm$ and $\cn$, i.e., if $p,q,p',q'$ are positive integers and $\cm, \cn, \cm', \cn'$ are matroids such that $p \leq p'$, $q \leq q'$, $\cm \subseteq \cm'$ and $\cn \subseteq \cn'$ then $\nu_{p,q}(\cm,\cn)\leq\nu_{p',q'}(\cm',\cn')$
	\end{obs}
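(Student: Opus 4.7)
The plan is to verify monotonicity in each of the four arguments separately and then chain the resulting inequalities.

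First I would dispense with monotonicity in the matroids. If $\cm \subseteq \cm'$ and $\cn \subseteq \cn'$, then any feasible tuple $(A_1,\dots,A_p,B_1,\dots,B_q)$ with $A_i \in \cm$ and $B_j \in \cn$ is also feasible for $\cm'$ and $\cn'$. Since we are taking the maximum over a larger domain on the right-hand side (with the same objective function), the inequality $\nu_{p,q}(\cm,\cn) \le \nu_{p,q}(\cm',\cn')$ is immediate.

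Next I would handle monotonicity in $p$ and $q$. Given any tuple $(A_1,\dots,A_p,B_1,\dots,B_q)$ attaining $\nu_{p,q}(\cm,\cn)$, I would pad it by setting $A_{p+1} = \dots = A_{p'} = \emptyset$ and $B_{q+1} = \dots = B_{q'} = \emptyset$. The empty set lies in every matroid by the first axiom in the definition, so this padded tuple is feasible for $\nu_{p',q'}(\cm,\cn)$. Adding empty sets does not change $\#v(A_1,\dots,A_{p'})$ or $\#v(B_1,\dots,B_{q'})$ for any $v$, hence the sum $\sum_v \min(\#v(A_1,\dots,A_{p'}),\#v(B_1,\dots,B_{q'}))$ equals the original sum. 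Thus $\nu_{p,q}(\cm,\cn) \le \nu_{p',q'}(\cm,\cn)$.

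Finally I would combine the two monotonicities, using for instance $\nu_{p,q}(\cm,\cn) \le \nu_{p',q'}(\cm,\cn) \le \nu_{p',q'}(\cm',\cn')$. There is no real obstacle here; the content is entirely the observation that the empty set is an independent set of every matroid, which makes the objective function invariant under padding with empty sets.
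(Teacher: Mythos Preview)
Your argument is correct: enlarging the matroids enlarges the feasible set for the maximum, and padding with empty sets (which lie in every matroid) handles monotonicity in $p$ and $q$. The paper states this as an observation without proof, so you have simply spelled out the routine verification the authors left implicit.
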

	By the closed down property of~$\cm$ and~$\cn$ and the double-counting argument, we can get the following observation.
	\begin{obs}
		For any two positive integers $p,q$ and two matroids $\cm,\cn$ on the same ground set $V$, there exist sets 
		$A_1, \ldots, A_p \in \cm$ and $B_1, \ldots, B_q \in \cn$ such that $\sum_{i=1}^p |A_i| = \sum_{j=1}^q |B_j| = \nu_{p,q}(\cm,\cn)$ and every $v \in V$ satisfies $\#v(A_1, \ldots, A_p) = \#v(B_1, \ldots, B_q)$.
	\end{obs}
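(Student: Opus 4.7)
The plan is to begin with families $A_1,\dots,A_p\in\cm$ and $B_1,\dots,B_q\in\cn$ that attain the maximum in the definition of $\nu_{p,q}(\cm,\cn)$, and then to trim them so that the pointwise equality $\#v(A_1,\dots,A_p)=\#v(B_1,\dots,B_q)$ is enforced at every $v\in V$, without losing any value in the sum. The mechanism will be simple deletion: whenever a vertex appears more often on one side than on the other, delete it from a set on the heavier side, using the closed-down property of the complex to stay inside the matroid.

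More precisely, suppose that for some $v\in V$ one has $\#v(A_1,\dots,A_p)>\#v(B_1,\dots,B_q)$. I would choose any index $i$ with $v\in A_i$ and replace $A_i$ by $A_i\setminus\{v\}$. Since $\cm$ is closed under taking subsets, the new $A_i$ is still in $\cm$. This operation decreases $\#v(A_1,\dots,A_p)$ by one and leaves every other count unchanged, so $\min\bigl(\#v(A_1,\dots,A_p),\#v(B_1,\dots,B_q)\bigr)$ is unaffected, and hence the value of the sum defining $\nu_{p,q}$ is preserved; in particular the modified families still attain the maximum. I would perform the symmetric operation for each $v$ with $\#v(B_1,\dots,B_q)>\#v(A_1,\dots,A_p)$. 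Each step strictly reduces the nonnegative integer $\sum_v|\#v(A_1,\dots,A_p)-\#v(B_1,\dots,B_q)|$, so the process terminates after finitely many deletions with pointwise equality at every vertex.

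Once equality holds everywhere, a double-counting computation finishes the argument:
\[\sum_{i=1}^p|A_i|=\sum_{v\in V}\#v(A_1,\dots,A_p)=\sum_{v\in V}\min\bigl(\#v(A_1,\dots,A_p),\#v(B_1,\dots,B_q)\bigr)=\nu_{p,q}(\cm,\cn),\]
and the same identity holds for $\sum_{j=1}^q|B_j|$. The only ingredient used is the closed-down (complex) property of a matroid, so there is no genuine obstacle here; the whole argument is a short \emph{smoothing} step rather than anything deep, and I would expect the author's proof to proceed along essentially these lines.
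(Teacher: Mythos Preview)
Your argument is correct and matches the paper's approach exactly: the paper simply remarks that the observation follows from the closed-down property of $\cm$ and $\cn$ together with a double-counting argument, which is precisely the trimming-plus-counting you describe.
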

	
	\begin{lem}\label{lemma:nuqqqnu11}
		If $\cm,\cn$ are matroids on a common ground set, then for every positive integer $q$
		$$
		\nu_{1,1}(\cm,\cn) \geq \lceil \frac{\nu_{q,q}(\cm,\cn)}{q} \rceil.
		$$
	\end{lem}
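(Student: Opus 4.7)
My plan is to combine the preceding observation (which produces witnesses to $\nu_{q,q}$ whose multiplicity functions agree on $\cm$-side and $\cn$-side) with Edmonds' matroid intersection min-max formula, which characterizes $\nu_{1,1}(\cm,\cn)$ as $\min_{X\subseteq V}\bigl(r_\cm(X)+r_\cn(V\setminus X)\bigr)$, where $r_\cm, r_\cn$ denote the rank functions.

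Concretely, apply the observation preceding the lemma with $p=q$ to obtain $A_1,\dots,A_q\in\cm$ and $B_1,\dots,B_q\in\cn$ with $\sum_i|A_i|=\sum_j|B_j|=\nu_{q,q}(\cm,\cn)$ and, for every $v\in V$, $f(v):=\#v(A_1,\dots,A_q)=\#v(B_1,\dots,B_q)$. In particular $\sum_{v\in V}f(v)=\nu_{q,q}(\cm,\cn)$. Now fix an arbitrary $X\subseteq V$. Since each $A_i$ is independent in $\cm$, $|A_i\cap X|\le r_\cm(X)$, so summing over $i$ gives $\sum_{v\in X}f(v)=\sum_i|A_i\cap X|\le q\cdot r_\cm(X)$. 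Symmetrically, using the sets $B_j$ and the complementary side, $\sum_{v\in V\setminus X}f(v)\le q\cdot r_\cn(V\setminus X)$. Adding these two inequalities yields
\[
\nu_{q,q}(\cm,\cn)=\sum_{v\in V}f(v)\le q\bigl(r_\cm(X)+r_\cn(V\setminus X)\bigr).
\]

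Taking the minimum over $X\subseteq V$ and invoking the matroid intersection theorem gives $\nu_{q,q}(\cm,\cn)/q\le\nu_{1,1}(\cm,\cn)$, and since $\nu_{1,1}(\cm,\cn)$ is an integer we may round up to conclude $\nu_{1,1}(\cm,\cn)\ge\lceil\nu_{q,q}(\cm,\cn)/q\rceil$.

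There is no real obstacle; the only point worth flagging is that the observation is used essentially to ensure the same function $f$ works on both the $\cm$-side and the $\cn$-side, which is what allows the two one-sided inequalities (over $X$ and over $V\setminus X$) to be added without loss. In effect, the argument says that the scaled vector $f/q$ lies in both independent-set polytopes, so its total mass is bounded by the max common independent set; an alternative write-up using Edmonds' matroid intersection polytope (which is integral) would give the same conclusion, but the rank-function min-max formulation above is the most direct route.
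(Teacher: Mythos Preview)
Your proof is correct and essentially identical to the paper's: both invoke Edmonds' matroid intersection theorem to split $V$ into two parts and then bound the $\cm$-side multiplicities on one part and the $\cn$-side multiplicities on the other, summing to at most $q\,\nu_{1,1}(\cm,\cn)$. One small remark: the preceding observation is not actually needed---since $\min\bigl(\#v(A_1,\dots,A_q),\#v(B_1,\dots,B_q)\bigr)$ is pointwise bounded by each of its arguments, the paper simply works with this minimum for arbitrary $A_i\in\cm$, $B_j\in\cn$ rather than first arranging the two multiplicity functions to agree.
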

	
	\begin{proof}
		
		Let $V$ be the common ground set.
		By Edmonds' matroid intersection theorem there exist sets $I,V_1,V_2 \subseteq V$ such that $V = V_1 \cup V_2$ and $V_1 \cap V_2 = \emptyset$ and $I \in \cm \cap \cn$ and $I \cap V_1$ is the largest among subsets of $V_1$ which are independent in $\cm$, and  $I \cap V_2$ is the largest among subsets of $V_2$ which are independent in $\cn$. 
		
		Now for every $A_1, \ldots, A_q \in \cm$ and $B_1, \ldots, B_q \in \cn$ we have
		\begin{align*}
			& \sum_{v \in V} \min \big(\#v(A_1, \ldots, A_q), \;\#v(B_1, \ldots, B_q)\big) \\
			= &\sum_{v \in V_1} \min \big(\#v(A_1, \ldots, A_q),\; \#v(B_1, \ldots, B_q)\big) +
			\sum_{v \in V_2} \min \big(\#v(A_1, \ldots, A_q),\;\#v(B_1, \ldots, B_q)\big) \\
			\leq&
			\sum_{v \in V_1} \#v(A_1 \cap V_1, \ldots, A_q \cap V_1 ) +
			\sum_{v \in V_2} 
			\#v(B_1 \cap V_2, \ldots, B_q \cap V_2) \\
			\leq& q\cdot |I \cap V_1| + q\cdot |I \cap V_2| = q|I| \leq q\nu_{1,1}(\cm,\cn),
		\end{align*}
		which completes the proof.
	\end{proof}
	
	\begin{lem}
		\label{dangling}
		Let $\cm,\cn$ be two matroids on the same ground set $V$ such that $\nu_{1,1}(\cm,\cn)>0$.
		Let $p,q$ be positive integers with $p \leq q$.
		Then there exist sets $X_1, \ldots X_p \in \cm$ and $Y_1, \ldots, Y_q \in \cn$ and an element $z \in V$ with the following properties:
		\begin{itemize}
			\item  Every $v \in V \setminus \{z\}$ satisfies $\#v(X_1, \ldots, X_p) = \#v(Y_1, \ldots, Y_q)$,
			\item  $\#z(X_1, \ldots, X_p) = p$,
			\item  $\sum_{i=1}^q |Y_i| = \nu_{p,q}(\cm,\cn)$.
		\end{itemize}
	\end{lem}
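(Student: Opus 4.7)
The plan is to apply Observation~4.3 to obtain a symmetric optimum and then perform an extremal/exchange argument to enforce that a single vertex $z$ belongs to every set on the $\cm$-side. First I would invoke Observation~4.3 to find $A_1,\ldots,A_p\in\cm$ and $B_1,\ldots,B_q\in\cn$ with $\sum_i|A_i|=\sum_j|B_j|=\nu_{p,q}(\cm,\cn)$ and $\#v(A_1,\ldots,A_p)=\#v(B_1,\ldots,B_q)$ for every $v\in V$. Because $\nu_{p,q}(\cm,\cn)\ge\nu_{1,1}(\cm,\cn)>0$, some vertex belongs to at least one $A_i$ and hence to some $B_j$; fix such a vertex and call it $z$, so in particular $\{z\}\in\cm\cap\cn$.

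Next I would consider the family $\mathcal{F}$ of tuples $(X_1,\ldots,X_p;Y_1,\ldots,Y_q)$ with $X_i\in\cm$, $Y_j\in\cn$, $\sum_j|Y_j|=\nu_{p,q}(\cm,\cn)$, and $\#v(X_1,\ldots,X_p)=\#v(Y_1,\ldots,Y_q)$ for every $v\ne z$. The family $\mathcal{F}$ is nonempty since the Observation~4.3 collection lies in it. Among all its members I would select one maximizing $\#z(X_1,\ldots,X_p)$ and argue by contradiction that this maximum equals $p$. If some $X_i$ omits $z$, then either $X_i\cup\{z\}\in\cm$, in which case replacing $X_i$ by $X_i\cup\{z\}$ remains in $\mathcal{F}$ and strictly increases $\#z(X_1,\ldots,X_p)$, contradicting maximality; or $X_i\cup\{z\}$ contains a unique $\cm$-circuit $C\ni z$, and I would seek $a\in C\setminus\{z\}$ together with an index $j$ satisfying $a\in Y_j$, $z\notin Y_j$, and $(Y_j\setminus\{a\})\cup\{z\}\in\cn$. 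The simultaneous swap $X_i\mapsto(X_i\setminus\{a\})\cup\{z\}$, $Y_j\mapsto(Y_j\setminus\{a\})\cup\{z\}$ then keeps $\cm$- and $\cn$-independence, preserves $\sum_j|Y_j|$ and the matching of counts on $V\setminus\{z\}$ (both sides lose one occurrence of $a$), and increases $\#z(X_1,\ldots,X_p)$ by one, again contradicting maximality.

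The main obstacle is verifying the existence of a compatible pair $(a,j)$. I would exploit the freedom in choosing $a$ among the $|C|-1$ elements of $C\setminus\{z\}$, together with the hypothesis $p\le q$ which ensures that not every $Y_j$ containing $a$ must also contain $z$ (there is enough room on the $\cn$-side to admit $z$). If no compatible $(a,j)$ existed, then for each $a\in C\setminus\{z\}$ and each $Y_j\ni a$ with $z\notin Y_j$ the set $(Y_j\setminus\{a\})\cup\{z\}$ would contain a $\cn$-circuit $D_{a,j}\ni z$; repeated application of the circuit-elimination axiom in $\cn$ to these $D_{a,j}$'s, combined with the $\cm$-circuit $C$, would yield either a $\cn$-dependent subset of some $Y_j$ or a configuration violating the optimality $\sum_j|Y_j|=\nu_{p,q}(\cm,\cn)$. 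This circuit-exchange step is the technical heart of the argument; if needed, one can strengthen the extremal choice within $\mathcal{F}$ (for instance by subsequently minimizing $\#z(Y_1,\ldots,Y_q)$) to guarantee the required vacant slots on the $\cn$-side.
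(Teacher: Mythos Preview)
Your overall strategy---choose an extremal balanced tuple and push $z$ into every $X_i$ via exchanges---is the right shape, but the specific extremal setup you chose does not give you enough leverage to carry out Case~2, and you acknowledge this yourself. The difficulty is real: once you fix $z$ as an arbitrary element of $\bigcup A_i$ and work at level $\nu_{p,q}$, the simultaneous swap $X_i\mapsto (X_i\setminus\{a\})\cup\{z\}$, $Y_j\mapsto(Y_j\setminus\{a\})\cup\{z\}$ requires a pair $(a,j)$ that is good for both matroids at once. Your sketch for producing such a pair invokes ``circuit elimination combining the $\cm$-circuit $C$ with the $\cn$-circuits $D_{a,j}$'', but circuit elimination operates within a single matroid; there is no mechanism that mixes a circuit of $\cm$ with circuits of $\cn$. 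Likewise, ``violating the optimality $\sum_j|Y_j|=\nu_{p,q}$'' cannot arise from your swap, which preserves all the $|Y_j|$ exactly. The claim that $p\le q$ ``ensures that not every $Y_j$ containing $a$ must also contain $z$'' is not justified either: nothing in your extremal conditions bounds $\#z(Y_1,\dots,Y_q)$ below $q$, even after your suggested secondary minimization.

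The paper avoids this coupling entirely by a different extremal choice. It works one step below the optimum, at total value $\nu_{p,q}(\cm,\cn)-1$, and takes as secondary objective the contribution coming from a fixed maximum common independent set $Z\in\cm\cap\cn$ of size $r=\nu_{1,1}(\cm,\cn)$. Because the total is $<qr$, some $B_j$ has size $<r$ and can be augmented by an element $z\in Z$; this is how $z$ is discovered rather than fixed in advance. Then the claim $A_i\cup\{z\}\in\cm$ is proved by an exchange that stays purely on the $\cm$-side: a circuit $C\subseteq A_i\cup\{z\}$ must leave $Z$ (since $Z\in\cm$), so one can swap some $c\in C\setminus Z$ for $z$ in $A_i$ and simply add $z$ to $B_q$---no compatibility condition on the $\cn$-side is needed beyond the augmentation already performed. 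The missing idea in your approach is precisely this: drop to level $\nu_{p,q}-1$ and let a maximum common independent set both supply $z$ and certify the $\cm$-side exchange.
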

	
	\begin{proof}
		Write $r = \nu_{1,1}(\cm,\cn)$.
		Let $Z \in \cm \cap \cn$ have size $r$, and let $A_1, \ldots, A_p \in \cm$ and $B_1, \ldots, B_q \in \cn$ be chosen such that 
		\begin{enumerate}[(a)]
			\item\label{eq:condition1} $\sum_{v \in V} \min \big(\#v(A_1, \ldots, A_p),\; \#v(B_1, \ldots, B_q)\big) = \nu_{p,q}(\cm,\cn)-1$,
			\item\label{eq:condition2} Subject to the above condition 
			$\sum_{v \in Z} \min \big(\#v(A_1, \ldots, A_p),\; \#v(B_1, \ldots, B_q)\big)$ is maximal,
			\item\label{eq:condition3} Subject to the above two conditions $\sum_{i=1}^p |A_i| + \sum_{j=1}^q |B_j|$ is minimal.
		\end{enumerate}
		Note that $\sum_{i=1}^p |A_i| + \sum_{j=1}^q |B_j|$ is minimal in condition~\eqref{eq:condition3} guarantees that for every $v\in V$,
		\begin{equation}\label{eq:observationequal}
			\#v(A_1, \ldots, A_p)=\#v(B_1, \ldots, B_q),
		\end{equation}
		since if $\#v(A_1, \ldots, A_p)>\#v(B_1, \ldots, B_q)$ for some~$v\in V$, we can remove~$v$ from some of~$A_i$ including it, which does not violate condition~\eqref{eq:condition1} or~\eqref{eq:condition2}, but has a smaller total size, a contradiction. Similarly, we can get a contradiction if $\#v(A_1, \ldots, A_p)<\#v(B_1, \ldots, B_q)$ for some~$v\in V$.
		
		Therefore we have
		\[\sum_{j=1}^q |B_j|=\sum_{v\in V}\#v(B_1, \ldots, B_q) = \nu_{p,q}(\cm,\cn)-1 < \nu_{p,q}(\cm,\cn)  \underset{\refOb{ob:monotone}}{\le} \nu_{q,q}(\cm,\cn)  \underset{\refL{lemma:nuqqqnu11}}{\le} qr,\]
		which implies at least one set among 
		$B_1, \ldots, B_q$ has size less than $r$.
		Without loss of generality $|B_q|< r$.
		By the independence augmentation axiom, this implies that for some $z \in Z\setminus B_q$ we have 
		$B'_q = B_q \cup \{z\} \in \cn$.
		
		We now claim that for every $i \in \{1, \ldots, p\}$ we have $A_i \cup \{z\} \in \cm$. Suppose not, then, say, $A_p \cup \{z\} \not\in \cm$, so it contains some circuit~$C$ of~$\cm$ and $z\in C$. Take some $c \in C \setminus Z$. Then~$c\in A_p$.
		By the independence augmentation axiom, $C\setminus\{c\}\in\cm$ can be extended to a size~$|A_p|$ set $A'_p = (A_p \cup \{z\}) \setminus \{c\} \in \cm$. 
		Since
		\[ \#c(A_1,\dots,A_{p-1},A_p')=\#c(A_1,\dots,A_{p-1},A_p)-1  \]
		and
		\begin{gather*}
			\min \Big(\#z(A_1, \ldots, A_{p-1}, A'_p),\; \#z(B_1, \ldots, B_{q-1}, B'_q)\Big)\\
			= \min \Big(\#z(A_1, \ldots, A_p),\; \#z(B_1, \ldots, B_q)\Big) +1, 
		\end{gather*}
		then by~\eqref{eq:observationequal} we now have 
		\begin{align*}
			&\sum_{v \in V} \min \Big(\#v(A_1, \ldots, A_{p-1}, A'_p),\; \#v(B_1, \ldots, B_{q-1}, B'_q)\Big) \\
			=&\sum_{v \in V} \min \Big(\#v(A_1, \ldots, A_{p-1}, A_p),\; \#v(B_1, \ldots, B_{q-1}, B_q) \Big)= \nu_{p,q}(\cm,\cn)-1
		\end{align*}
		and 
		\begin{gather*}
			\sum_{v \in Z} \min \Big(\#v(A_1, \ldots, A_{p-1}, A'_p),\; \#v(B_1, \ldots, B_{q-1}, B'_q)\Big)\\
			=\sum_{v \in Z} \min \Big(\#v(A_1, \ldots, A_p),\; \#v(B_1, \ldots, B_q)\Big) + 1
		\end{gather*}
		contradicting the way~\eqref{eq:condition2} in which $A_1, \ldots, A_q$ and $B_1, \ldots, B_q$ were chosen. This finishes the proof of the claim.  
		
		Now the sets $X_i = A_i \cup \{z\}$ (for $i=1, \ldots, p$) and $Y_j = B_j$ (for $j = 1, \ldots, q-1$) and $Y_q = B_q \cup \{z\}$ 
		satisfy the required conditions of the lemma.
	\end{proof}
	
	\section{Relation between the parameters}\label{sec:topvscomb}
	
	\begin{thm}\label{thm:etanupq}
		Let $p,q$ be two positive integers and let $\cm, \cn$ be two matroids on the same ground set. Then 
		\[\eta(\cm \cap \cn) \geq \frac{\nu_{p,q}(\cm,\cn)}{p+q} \; .\]
	\end{thm}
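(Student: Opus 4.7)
The plan is to prove \refT{thm:etanupq} by induction on $|V|$, using \refL{dangling} to pick a vertex $z$ and then \refT{thm:colooporcontract} to recurse. Assume without loss of generality that $p\leq q$; the case $|V|=0$ is vacuous, and the step is trivial when $\nu_{p,q}(\cm,\cn)=0$. Otherwise $\nu_{1,1}(\cm,\cn)\geq 1$, and \refL{dangling} furnishes $X_1,\dots,X_p\in\cm$, $Y_1,\dots,Y_q\in\cn$ and a vertex $z\in V$ satisfying the stated properties; in particular $z\in X_i$ for every $i$, forcing $\{z\}\in\cm$. Apply \refT{thm:colooporcontract} with $\cm_1=\cm$, $\cm_2=\cn$ and $v=z$.

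In the contract alternative the theorem supplies a circuit $C$ of $\cm$ with $z\in C$, $C\in\cn$, and $\eta(\cm\cap\cn)\geq \eta((\cm/C)\cap(\cn/C))+|C|-1$. Because $\{z\}\in\cm$ forces $|C|\geq 2$, the ground set is strictly smaller on the right and the inductive hypothesis yields $\eta((\cm/C)\cap(\cn/C))\geq \nu_{p,q}(\cm/C,\cn/C)/(p+q)$. It is then enough to prove the loss bound
\[\nu_{p,q}(\cm,\cn)-\nu_{p,q}(\cm/C,\cn/C)\leq (p+q)(|C|-1),\]
since this, combined with the $+|C|-1$ gain, gives the required inequality. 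I would establish the loss bound by taking optimal $A_i\in\cm$, $B_j\in\cn$, restricting them to maximal independent $\hat A_i\subseteq A_i\setminus C$ in $\cm/C$ and $\hat B_j\subseteq B_j\setminus C$ in $\cn/C$, and tracking the drop in $\sum_v\min(\#v(\cdot),\#v(\cdot))$ via the per-coordinate inequality $\min(a,b)-\min(a',b')\leq\max(a-a',b-b')$.

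In the loop alternative $\eta(\cm\cap\cn)\geq\eta((\cm\sim z)\cap\cn)$; since $z$ is free in $\cm\sim z$ while $\{z\}\in\cn$ (because $z\in Y_q$), one can either apply \refT{thm:vertexlink} at $z$ to the combined hypergraph with edges $circ(\cm\sim z)\cup circ(\cn)$ and recurse on complexes on $V\setminus\{z\}$, or iterate \refT{thm:colooporcontract} with $\cn$ in the first slot, which either reaches the doubly-free situation $(\cm\sim z)\cap(\cn\sim z)$ (a cone on $z$, so $\eta=\infty$) or contracts a circuit $C'$ of $\cn$ through $z$ and recurses on the strictly smaller ground set $V\setminus C'$. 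The main obstacle throughout is the loss bound in the contract cases: naive rank estimates only give $p(|C|-1)+q|C|$, exceeding the target $(p+q)(|C|-1)$ by $q$, because $C$ is a circuit on one side but independent on the other. Closing this deficit is where the structural information from \refL{dangling}, particularly $z\in X_i\cap C$ for every $i$ and $z\in Y_q$, must be leveraged to control $\sum_{v\in C}\max(\#v(X),\#v(Y))$ from below and absorb the asymmetry between the $\cm$- and $\cn$-sides.
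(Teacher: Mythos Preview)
Your skeleton matches the paper's exactly: induct on $|V|$, use \refL{dangling} to single out $z$, apply \refT{thm:colooporcontract} to $\cm$, and in the loop branch apply it again to $\cn$. The cone case and the recursion structure are right. The gap is precisely where you locate it---the loss bound in the two contract branches---and your proposal does not close it; it only names the obstacle.

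In the $\cm$-circuit branch the paper does \emph{not} switch to the dangling data $X_i,Y_j$; it stays with fresh optimal $A_i\in\cm$, $B_j\in\cn$. The deficit of $q$ you see is eliminated by a one-vertex trick at $z$, not by controlling $\sum_{v\in C}\max(\#v(X),\#v(Y))$. Concretely: take $S_i\supseteq A_i\cap C$ with $|S_i|=s:=|C|-1$ (possible since $C\notin\cm$), and on the $\cn$-side first strip $z$ and then take $T_j\supseteq (B_j\setminus\{z\})\cap C$ with $|T_j|=s$ (possible since now the intersection sits inside $C\setminus\{z\}$). With $A_i'=A_i\setminus S_i$ and $B_j'=(B_j\setminus\{z\})\setminus T_j$ one has, for $u\neq z$, the additive bound $\min(\#u(A),\#u(B))\le\min(\#u(A'),\#u(B'))+\#u(S_1,\dots,S_p,T_1,\dots,T_q)$; the point is that at $u=z$ the inequality $\min(\#z(A),\#z(B))\le\#z(A)\le\#z(S_1,\dots,S_p)$ holds because $z\in A_i$ forces $z\in A_i\cap C\subseteq S_i$. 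Summing gives $\nu_{p,q}(\cm,\cn)\le\nu_{p,q}(\cm/C,\cn/C)+(p+q)s$ with budget exactly $(p+q)s$. Your per-coordinate inequality $\min(a,b)-\min(a',b')\le\max(a-a',b-b')$ is too coarse to see this; what matters is that the single excess removal on the $\cn$-side is already paid for inside the $S_i$'s on the circuit side.

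In the $\cn$-circuit branch (after freeing $z$ in $\cm$) the roles reverse: $D$ is a circuit of $\cn$ and independent in $\cm\sim z$. Here the paper does use the dangling data $X_i,Y_j$, but its role is different from what you suggest. Because $\sum_j|Y_j|=\nu_{p,q}(\cm,\cn)$ and $\#v(X)=\#v(Y)$ for all $v\neq z$, one bounds $\sum_v\#v(Y)$ rather than a minimum; the pre-deletion trick is now applied on the $\cm\sim z$-side, setting $X_i'=(X_i\setminus\{z\})\setminus S_i'$, and this is exactly where $z\in X_i$ for every $i$ is used. So the dangling information enters only in the second contract branch, and there it serves to make the pre-deletion of $z$ legitimate and to turn the objective into a straight sum---not to bound $\sum_{v\in C}\max(\#v(X),\#v(Y))$ as you propose.
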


	\begin{proof}
		The proof is by induction on the size of the common ground set~$V$.
		When $V=\emptyset$, $\eta(\cm\cap\cn)=\nu_{p,q}(\cm,\cn)=0$. The statement is true.
		
		Next we turn to the case $|V|\ge 1$. We assume without loss of generality $p \leq q$ and by Lemma \ref{dangling}, there exist sets $X_1, \ldots X_p \in \cm$ and $Y_1, \ldots, Y_q \in \cn$ and an element $z \in V$ with the following properties:
		\begin{itemize}
			\item Every $v \in V \setminus \{z\}$ satisfies $\#v(X_1, \ldots, X_p) = \#v(Y_1, \ldots, Y_q)$,
			\item $\#z(X_1, \ldots, X_p) = p$,
			\item $\sum_{i=1}^q |Y_i| = \nu_{p,q}(\cm,\cn)$.
		\end{itemize}

		We apply \refT{thm:colooporcontract} with $\cm$ taking the role of $\cm_1$. Then either
		\begin{equation}\label{eq:thefirstcase}
			\eta(\cm \cap \cn) \geq \eta\Big((\cm \sim z) \cap \cn\Big)
		\end{equation}
		or there is some circuit~$C$ in~$\cm$ such that $z\in C$, $C\in\cn$, and
		\begin{equation}\label{eq:thesecondcase}
			\eta(\cm \cap \cn) \geq \eta\Big((\cm  / C) \cap (\cn / C)\Big) + |C| - 1.
		\end{equation}
		If~\eqref{eq:thesecondcase} occurs,
		write $I = C \setminus \{z\}$ and $s = |I| = |C|-1$.
		Let $A_1, \ldots, A_p \in \cm$ and $B_1, \ldots, B_q \in \cn$ satisfy
		$\nu_{p,q}(\cm,\cn) = \sum_{u \in V} \min \Big(\#u(A_1, \ldots, A_p),\;\#u(B_1, \ldots, B_q)\Big)$.

		We can find sets $S_1, \ldots S_p, T_1, \ldots, T_q$, each of size~$s$, such that $A_i \setminus S_i \in \cm / C$ for all $i \in \{1, \ldots, p\}$ and  $(B_j \setminus \{z\}) \setminus T_j \in \cn / C$ for all $j \in \{1, \ldots, q\}$. In detail, to construct~$S_i$, 
		since $C$ is a circuit in~$\cm$, $C\not\subseteq A_i$ and then $|A_i\cap C|<|C|=s+1$,
		we take $A_i\cap C$ and add any $s-|A_i\cap C|$ other elements. To construct~$T_i$, we take $(B_j\setminus\{z\})\cap C$ and add any $s-|(B_j\setminus\{z\})\cap C|$ other elements.
		
		Write $A'_i = A_i \setminus S_i$, which is in~$\cm/C$, for all $i \in \{1, \ldots, p\}$, and  $B'_j = (B_j \setminus \{z\}) \setminus T_j$, which is in~$\cn/C$, for all $j \in \{1, \ldots, q\}$.
		We now claim that for each $u \in V$ we have 
		\begin{align*}
			&\min \Big(\#u(A_1, \ldots, A_p),\; \#u(B_1, \ldots, B_q)\Big)\\
			\le & 
			\min \Big(\#u(A'_1, \ldots, A'_p),\; \#u(B'_1, \ldots, B'_q)\Big)
			+ \#u(S_1, \ldots, S_p, T_1, \ldots, T_q).
		\end{align*}

		Indeed, when $u \neq z$ this is trivial, since $u\in A_i$ if and only if $u\in A_i'$ or~$S_i$, and $u\in B_j$ if and only if $u\in B_j'$ or~$T_j$; and for $u=z$ it follows from that fact that whenever $z \in A_i$ we must have also $z \in S_i$ so that
		\[ \#z(A_1, \ldots, A_p)\le \#z(S_1, \ldots, S_p, T_1, \ldots, T_q).    \]
		
		We thus have
		\begin{align*}
			\nu_{p,q}(\cm,\cn) &= \sum_{u \in V} \min \Big(\#u(A_1, \ldots, A_p),\;\#u(B_1, \ldots, B_q)\Big)\\
			&\le \sum_{u\in V}\Bigg(  \min \Big(\#u(A'_1, \ldots, A'_p), \;\#u(B'_1, \ldots, B'_q)\Big)
			+ \#u(S_1, \ldots, S_p, T_1, \ldots, T_q)   \Bigg)\\
			&\le \nu_{p,q}(\cm / C, \cn / C)+(p+q)s
		\end{align*}
		and by the induction hypothesis 
		\[
		\eta(\cm \cap \cn) \geq \eta\Big((\cm / C) \cap (\cn / C)\Big) + s \geq
		\frac{\nu_{p,q}(\cm / C, \cn / C)}{p+q} + s \geq \frac{\nu_{p,q}(\cm, \cn)}{p+q}.
		\]

		If~\eqref{eq:thefirstcase} occurs, applying~\refT{thm:colooporcontract} again to $\cm\sim z$ and $\cn$ with $\cn$ taking the role of~$\cm_1$, either 
		\[\eta(\cm\cap\cn)\ge\eta\Big((\cm\sim z)\cap\cn\Big)\ge \eta\Big((\cm\sim z)\cap(\cn\sim z)\Big)\mathop{\ge}_{\text{\refT{thm:etajoin}}} \eta(2^{\{z\}})=\infty,\]
		in which case we have  $\eta(\cm\cap\cn)\ge \frac{\nu_{p+q}(\cm,\cn)}{p+q}$,
		or
		there exists a circuit~$D$ in~$\cn$ such that $z\in D$ and~$D\in\cm\sim z$ and 
		\[\eta(\cm\cap\cn)\ge \eta\Big((\cm\sim z)\cap \cn\Big)\ge
		\eta\Big( \big((\cm\sim z)/D \big)\cap\big(\cn/D\big) \Big)+|D|-1.\]
		In the last case, similar as above, we can find sets $S_1',\dots, S_p',T_1',\dots, T_q'$, each of size~$t=|D|-1$, such that for each $1\le i\le p$,
		$X_i':=(X_i\setminus\{z\})\setminus S_i'\in (\cm\sim z)/D$ using the fact that $z\in X_i$, and for each $1\le j\le q$, $Y_j':=Y_j\setminus T_j'\in\cn/D$. Thus
		\begin{align*}
			\#u(Y_1,\dots,Y_q)\le \min\Big( \#u(X_1',\dots,X_p'),\; \#u(Y_1',\dots,Y_q')  \Big) + \#u(S_1',\dots,S_p',T_1',\dots,T_q'),  
		\end{align*}
		since for $u\neq z$,  $\#u(Y_1,\dots,Y_q)=\#u(X_1,\dots, X_p)$, and for $u=z$, $z\in Y_j$ implies $z\in T_j'$.
		Therefore
		\begin{align*}
			\nu_{p,q}(\cm,\cn)&= \sum_{i=1}^q |Y_i| =\sum_{u\in V}\#u(Y_1,\dots,Y_q)\\ 
			&\le \sum_{u\in V}\Bigg( \min\Big( \#u(X_1',\dots,X_p'),\; \#u(Y_1',\dots,Y_q')  \Big) + \#u(S_1',\dots,S_p',T_1',\dots,T_q') \Bigg)\\
			&\le \nu_{p,q}\Big(  (\cm\sim z)/D,\cn/D   \Big)+(p+q)t,
		\end{align*}
		and by the induction hypothesis,
		\[\eta(\cm\cap\cn)\ge 
		\eta\Big( \big((\cm\sim z)/D \big)\cap\big(\cn/D\big) \Big)+t \ge \frac{\nu_{p,q}\Big(  (\cm\sim z)/D,\cn/D   \Big)}{p+q}+t\ge\frac{\nu_{p,q}(\cm,\cn)}{p+q},  \]
		which completes the proof.
	\end{proof}

	\section{Proof of~\refT{thm:chiellsum}}\label{sec:proof}
	\begin{notation}
		For a complex~$\C$ on the ground set~$V$, let $\Delta_{\eta}(\C)=\max_{\emptyset\neq S\subseteq V(\C)}\frac{|S|}{\eta(\C[S])}$.
	\end{notation}
	Applying~Theorem 4.2 in~\cite{matcomp}, for a complex~$\cc$, it is proved in Corollary 8.6 of~\cite{matcomp} that $\chi(\cc)\le \lceil\Delta_\eta(\cc)\rceil$. In~\cite{intersectionofmatroids}, this bound is extended to the list chromatic number.
	
	\begin{thm}\label{thm:Deltaeta}
		For a complex~$\C$, $\chi_\ell(\C)\le \lceil\Delta_{\eta}(\C)\rceil$.
	\end{thm}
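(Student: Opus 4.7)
The plan is to reformulate list-colorability as a topological system-of-distinct-representatives (SDR) problem inside an auxiliary join complex, and then apply a Hall-type topological theorem.

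Set $k=\lceil\Delta_\eta(\C)\rceil$, so that $\eta(\C[S])\ge |S|/k$ for every nonempty $S\subseteq V$, where $V$ is the ground set of $\C$. Given lists $(L_v)_{v\in V}$ of size $k$, I would, for each color $c\in\bigcup_v L_v$, set $V_c=\{v:c\in L_v\}$ and observe that a valid choice function is exactly an assignment $f$ with $f(v)\in L_v$ and $\{v:f(v)=c\}\in\C[V_c]$ for every $c$.

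First, I would pass to the ``labeled'' ground set $V^{*}=\bigsqcup_{c}V_c$, i.e., the set of pairs $(v,c)$ with $c\in L_v$, and form on it the join complex $\C^{*}=*_{c}\C[V_c]$ (identifying the $c$-slice of $V^{*}$ with $V_c$). Writing $F_v=\{(v,c):c\in L_v\}$, valid list colorings correspond bijectively to SDRs for the partition $V^{*}=\bigsqcup_{v\in V}F_v$ whose underlying sets lie in $\C^{*}$.

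Next, I would verify the topological Hall condition for this SDR problem. For every $T\subseteq V$, the restriction of $\C^{*}$ to $\bigsqcup_{v\in T}F_v=\bigsqcup_c(V_c\cap T)$ is again the join $*_{c}\C[V_c\cap T]$, so iterating \refT{thm:etajoin} together with $\Delta_\eta(\C)\le k$ gives
\[\eta\bigl(\C^{*}\bigl[\textstyle\bigsqcup_{v\in T}F_v\bigr]\bigr)\;\ge\;\sum_c\eta(\C[V_c\cap T])\;\ge\;\sum_c\tfrac{|V_c\cap T|}{k}\;=\;\tfrac{1}{k}\sum_{v\in T}|L_v|\;=\;|T|,\]
using the identity $\sum_c|V_c\cap T|=\sum_{v\in T}|L_v|=k|T|$ and the convention $\eta(\emptyset)=0$ for empty slices.

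Finally, I would invoke the topological Hall-type SDR theorem for complexes (the ingredient behind Theorem~4.2 of \cite{matcomp}, extended to the list setting in \cite{intersectionofmatroids}): since $V^{*}=\bigsqcup_v F_v$ is a partition and $\eta(\C^{*}[\bigsqcup_{v\in T}F_v])\ge|T|$ for every $T\subseteq V$, the complex $\C^{*}$ admits an SDR for the fibers $\{F_v\}_{v\in V}$. By the bijection above, such an SDR is a valid choice function, yielding $\chi_\ell(\C)\le k$.

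The main obstacle is the topological Hall-type SDR theorem invoked at the last step; this is precisely the ingredient upgraded from the chromatic setting of \cite{matcomp} to the list-chromatic setting in \cite{intersectionofmatroids}. Once it is granted, the join computation via \refT{thm:etajoin} and the translation between SDRs and list colorings are routine bookkeeping. A minor point to handle cleanly is that when $V_c\cap T=\emptyset$ we rely on the empty-complex convention $\eta(\emptyset)=0$ so that the join inequality still applies term-by-term.
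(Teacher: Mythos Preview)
The paper does not give its own proof of \refT{thm:Deltaeta}; it simply quotes the result from \cite{intersectionofmatroids}, having noted that the chromatic-number version $\chi(\C)\le\lceil\Delta_\eta(\C)\rceil$ is Corollary~8.6 of \cite{matcomp}. So there is no in-paper argument to compare against.

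That said, your sketch is correct and is essentially the standard route: form the join $\C^{*}=*_{c}\C[V_c]$ over the colors, partition its ground set into the fibers $F_v$, use \refT{thm:etajoin} together with $\Delta_\eta(\C)\le k$ to verify the hypothesis $\eta(\C^{*}[\bigcup_{v\in T}F_v])\ge|T|$ of the topological Hall theorem (Theorem~4.2 of \cite{matcomp}), and read off a rainbow simplex, which after passing to a sub-simplex with one element per fiber gives the desired choice function. One small clarification: the black box you invoke at the end is not a \emph{new} list-chromatic theorem---it is the \emph{same} topological Hall/ISR theorem already in \cite{matcomp}. What is new in the list setting is only the choice of complex and partition to which it is applied (the join over colors with the fibers $F_v$), exactly as you set it up. Your handling of empty slices is fine: whether one reads $\C[\emptyset]$ as $\emptyset$ or $\{\emptyset\}$, its $\eta$ is $0$ and the term contributes nothing to the sum while leaving the join unchanged.
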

	
	\begin{proof}[Proof of~\refT{thm:chiellsum}]
		Let~$V$ be the common ground set of~$\cm$ and~$\cn$ and let $\C=\cm\cap\cn$.
		
		Let $p=\chi(\cm)$ and $q=\chi(\cn)$.    Let $A_1,\dots,A_p\in \cm$ satisfying that $\cup_{i=1}^{p}A_i=V$ and let $B_1,\dots, B_{q}\in\cn$ satisfying that $\cup_{j=1}^{q}B_j=V$. Then
		\[ \min\Big(\#v(A_1,\dots,A_p), \#v(B_1,\dots,B_q)  \Big)\ge 1  \]
		for every $v\in V$, which implies that $\nu_{p,q}(\cm,\cn)\ge |V|$.
		Thus~\refT{thm:etanupq} implies that $\eta(\C)\ge \frac{|V|}{p+q}$, which is equivalent to
		\[   \frac{|V|}{\eta(\C)}\le p+q. \]
		Noting that $\chi(\cm[S])\le \chi(\cm)$ and $\chi(\cn[S])\le \chi(\cn)$ for every $S\subseteq V$, the above argument works for any non-empty subset~$S$ of~$V$, therefore we have
		\begin{equation}\label{eq:ineqDelta}
			\Delta_\eta(\C)\le \chi(\cm)+\chi(\cn), 
		\end{equation}
		which together with~\refT{thm:Deltaeta} completes the proof.
	\end{proof}
	\begin{remark}
		The bound $\Delta_\eta(\cm\cap\cn)\le \chi(\cm)+\chi(\cn)$ in~\eqref{eq:ineqDelta} is tight.
	\end{remark}
	\begin{proof}
		Consider the 4-cycle on $\{1,2,3,4\}$ whose edges\footnote{We use $uv$ as a shorthand for $\{u,v\}$.} are $12,23,34,41$. Then we blow up each of $12,34$ by~$p$ (parallel) edges and blow up each of $23,41$ by~$q$ (parallel) edges. Let the resulting graph be~$G$. 
		Next we define two partition matroids~$\cm$ and~$\cn$ on the ground set~$E(G)$. The parts of~$\cm$ are $\Gamma_G(1)$ (all the edges of~$G$ incident with vertex 1) and $\Gamma_G(3)$, which satisfies
		$\chi(\cm)=p+q$. The parts of~$\cn$ are $\Gamma_G(2)$ and $\Gamma_G(4)$, which satisfies $\chi(\cn)=p+q$. 
		Let~$\cc$ be the matching complex of~$G$, i.e., the collection of all the matchings in~$G$. Then~$\cc$ is the intersection of the two partition matroids~$\cm$ and~$\cn$. Since the matching complex has two connected components, $\eta(\cc)=1$ so that
		\[\Delta_\eta(\cm\cap\cn)=|E(G)|=2(p+q)=\chi(\cm)+\chi(\cn),\]
		where the equality holds.
	\end{proof}

	\noindent\textbf{Acknowledgment:} We thank the anonymous reviewers for their careful reading and helpful comments. Research of the second author is supported by the Kempe Foundation grant JCSMK23-0055.
	
	\small
	\bibliographystyle{abbrv}
	
	\normalsize
\end{document}